\documentclass[11pt]{article}
\usepackage[numbers,sort&compress]{natbib}
\usepackage{enumerate}
\usepackage{amscd}
\usepackage{amsmath}
\usepackage{latexsym}
\usepackage{amsfonts}
\usepackage{setspace}
\usepackage{amssymb}
\usepackage{amsthm}
\usepackage{verbatim}
\usepackage{mathrsfs}
\usepackage{enumerate}
\usepackage[hypertexnames=false]{hyperref}
\usepackage[utf8]{inputenc}
\usepackage{amsmath, amssymb, pgfplots}
\pgfplotsset{compat = newest, width = 12cm, height =12cm}
\usepackage{tikz}
\usepackage{caption}

\theoremstyle{plain}
\theoremstyle{definition}\newtheorem{theorem}{Theorem}[section]
\theoremstyle{definition}\newtheorem{lemma}[theorem]{Lemma}
\theoremstyle{plain}
\theoremstyle{plain}\newtheorem{prop}[theorem]{Proposition}
\theoremstyle{definition}\newtheorem{remark}{Remark}[section]
\usepackage{xcolor}

\allowdisplaybreaks

\numberwithin{equation}{section}
\begin{document}
	\title{Existence of Periodic Solutions to  Steady Viscous Burgers Equation with a General Force}
	\author{Yuhan Cao\footnote{School of Mathematical Sciences, Capital Normal University, Beijing, 100048, P. R. China. Email: caoyuhan@163.com},\ \ \ Quansen Jiu\footnote{School of Mathematical Sciences, Capital Normal University, Beijing, 100048, P. R. China. Email: jiuqs@cnu.edu.cn}}
	\date{}
	\maketitle
	\begin{abstract}
		In this paper, we will construct  periodic solutions to the viscous steady Burgers equation with an external force $f(x)$, based on the  following formal expansion
		\begin{equation*}
			u^\varepsilon (x)=u_0(x)+\varepsilon u_1(x)+\cdots+\varepsilon ^nu_n(x)+\cdots,
		\end{equation*}
		where $\varepsilon\ge 0$ represents the viscosity and $u_0(x)$ is a solution to  non-viscous steady Burgers equation with the external force $f(x)$. We will focus on the solutions  which are uniformly bounded with respect to the viscosity. In \cite{JC}, starting from $u_0=-(2+\cos x)$, the authors   constructed the periodic solutions to the viscous steady Burgers equation with the external force $f=u_0(x)u_{0x}=-2\sin x-\sin x\cos x$. In this paper, we will extend the main result obtained in \cite{JC} and construct the solutions starting from  general $u_0$  and  $f$   satisfying the non-viscous steady Burgers equation  $u_0(x)u_{0x}=f$.  It will be shown that there exists a $\varepsilon_0>0$, which depends on $n$, such that for any $0<\varepsilon<\varepsilon_0$, the viscous steady Burgers equation with the external force $f$ has a periodic solution $u^\varepsilon(x) \in C^2([0,2\pi])$,  satisfying
		$$|u^\varepsilon(x)-u_0(x)-\varepsilon u_1(x)-\cdots-\varepsilon^nu_n(x)| \leq C\varepsilon^{n+1},$$
		where $C>0$ is a constant which depends on $n$, but is independent of $\varepsilon$.

		Compared with Jauslin-Kreiss-Moser’s result in \cite{M}, we present a new approach to construct  periodic solutions to the viscous steady Burgers with a general external force.  The constructed solutions will tend to the ones of the non-viscous Burgers equation with a sharper convergence rate (up to higher order) when the viscosity vanishes.
		
	\end{abstract}
	\noindent {\bf MSC(2020):}\quad 35B30, 35Q35, 76W05.
	\vskip 0.02cm
	\noindent {\bf Keywords:} Burgers equation; periodic solutions; Fourier series; inviscid limits.
	
	
	\section{Introduction and main results}
	\qquad	The one-dimensional viscous Burgers equation can be written as (see \cite{H})
	\begin{equation}\label{03}
		u_t-\varepsilon u_{xx}+uu_x=f,
	\end{equation}
	where $u=u(x,t)$ is an unknown function, $\varepsilon \geq0$ represents the viscosity coefficient, and $f=f(x,t)$ expresses an external force which is a known function of $x$ and $t$.  When $\varepsilon=0,$ equation \eqref{03} becomes the  non-viscous Burgers equation
	\begin{equation}\label{02}
		u_t+uu_x=f.
	\end{equation}\\
	In \cite{M},  Jauslin, Kreiss, and Moser investigated the existence and uniqueness of the periodic solution to \eqref{03} for some specific smooth forcing function $ f(x,t)$. Among these results, it is also proved in \cite{M} that, when the external force is time-independent,  the equation \eqref{03} admits a  solution satisfying the following steady viscous Burgers equation
	\begin{equation}\label{04}
		\frac{1}{2}(U^2)_x=\varepsilon U_{xx}+F_x,
	\end{equation}
	where $F=F(x)$ is a smooth function of $x$. We state two main results in \cite{M} as follows.
	\begin{prop}{(\cite{M})}\label{prop1}
		Assume that $f=F_x(x,t)$, where $F(x,t)$ is a smooth function of period one in both variables $x$ and $t$. Then equation \eqref{03} with $\varepsilon >0$ has a unique $\mathrm{{Z}^{2}}$-periodic solution satisfying
		$\int_0^1 u ~dx=c$ for any fixed $c \in \mathrm{{R}}$.
	\end{prop}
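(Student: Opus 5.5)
We write $\varepsilon>0$ and $f=F_x$ with $F$ smooth and $1$-periodic in $x$ and $t$. We seek a solution periodic in both variables, normalized by its spatial mean.

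\emph{Conservation of the mean.} Integrating \eqref{03} over a spatial period gives
$$\frac{d}{dt}\int_0^1 u\,dx=\int_0^1\bigl(\varepsilon u_{xx}-\tfrac12(u^2)_x+F_x\bigr)dx=0,$$
since every term on the right is an exact $x$-derivative of a periodic function. So the mean $c$ is conserved, and the problem splits into three parts: global well-posedness of the Cauchy problem for periodic data with mean $c$; existence of a time-periodic solution via a fixed point of the period-one Poincaré map $P:u(\cdot,0)\mapsto u(\cdot,1)$; and uniqueness.

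\emph{Global existence and absorbing bounds.} Local existence in $H^1$ of the circle follows from the heat semigroup and a contraction argument. For global bounds and compactness I would use the maximum principle through the Cole--Hopf-type substitution. Put $u=c+w_x$, where $w$ is periodic because $u-c$ has zero mean. Then $w$ solves
$$w_t-\varepsilon w_{xx}+c\,w_x+\tfrac12 w_x^2=F+\lambda(t),$$
with $\lambda(t)$ a spatially constant normalization. With $\phi=e^{-w/(2\varepsilon)}$ this becomes a linear parabolic equation
$$\phi_t=\varepsilon\phi_{xx}-c\,\phi_x-\frac{1}{2\varepsilon}(F+\lambda)\phi,$$
with positive periodic solutions. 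Because the equation is linear, we get global existence directly. Krein--Rutman/Harnack estimates give uniform bounds on $\phi_x/\phi$, hence on $u$, that are independent of the initial data after time $1$. Parabolic smoothing then makes $P$ a compact map of a bounded convex set $\{\|u\|_{H^1}\le R,\ \int u=c\}$ into itself. Schauder's fixed point theorem gives a fixed point, which is a $\mathbb Z^2$-periodic solution.

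\emph{Uniqueness (the main obstacle).} Let $u,v$ be two periodic solutions with the same mean, and set $d=u-v$. Then
$$d_t-\varepsilon d_{xx}+\bigl(\tfrac{u+v}{2}\,d\bigr)_x=0,\qquad \int_0^1 d\,dx=0.$$
This equation is in conservation form, so the $L^1$ norm is nonincreasing. Apply Kato's inequality to $\operatorname{sgn}(d)$, or alternatively use the linear Cole--Hopf picture: there the periodic problem is a positive linear propagator, and Birkhoff--Hopf contraction of the Hilbert projective metric yields a unique positive Floquet eigenfunction up to scaling. Scaling $\phi$ corresponds to adding a constant to $w$, which leaves $u$ unchanged. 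Strict contraction holds because the propagator is strictly positive (strong maximum principle), so the projective fixed point is unique, and therefore $u$ is unique for the given mean $c$.

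The delicate points are handling the normalizing constant $\lambda(t)$ and checking that the Floquet eigenvalue, that is, the choice of mean, matches $c$. I would settle this by solving the linear problem with drift $c$ and reading off the mean of $u$ from $\int_0^1 w_x\,dx=0$.
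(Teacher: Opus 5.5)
The paper does not prove Proposition \ref{prop1}. It is quoted from \cite{M} as background, so there is no in-paper argument to compare yours with. Judged on its own, your Cole--Hopf argument is correct, and it can be made both tighter and shorter.

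\textbf{Streamlining the Cole--Hopf route.} Renormalize $w$ by a function of $t$ so that $\lambda\equiv 0$. The linear equation then has $1$-periodic coefficients, and its monodromy operator $T:\phi(\cdot,0)\mapsto\phi(\cdot,1)$ has a continuous kernel $\Gamma$ on $\mathbb{T}\times\mathbb{T}$ with $0<a\le\Gamma\le b$ and $|\partial_x^k\Gamma|\le b_k$. A $\mathbb{Z}^2$-periodic $u$ with mean $c$ corresponds exactly, up to scaling of $\phi$, to a positive eigenvector $T\phi_0=\mu\phi_0$.
\begin{itemize}
\item The multiplier $\mu$ is generally not $1$ and has nothing to do with the mean. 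The mean is already fixed by the drift $c$ through $\int_0^1\phi_x/\phi\,dx=0$. So your phrase ``the Floquet eigenvalue, that is, the choice of mean'' is a confusion, although your proposed remedy (read the mean off from the drift) is the right one.
\item Krein--Rutman for the compact, strongly positive $T$ on $C(\mathbb{T})$ gives existence and uniqueness at once. This makes your Schauder/$H^1$ step unnecessary.
\item If you prefer Birkhoff--Hopf, what makes the contraction strict is the two-sided bound $a\le\Gamma\le b$: the projective diameter of $T(\text{cone})$ is then at most $2\log(b/a)$. Pointwise positivity of $T\phi$ alone is not enough.
\item The same kernel bounds give $|\partial_x^k\phi(\cdot,1)|\le (b_k/a)\,\phi(\cdot,1)$. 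This is the precise justification for your data-independent bound on $P(u_0)$, which ``Krein--Rutman/Harnack'' leaves vague.
\end{itemize}

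\textbf{The $L^1$ alternative.} As stated it is incomplete. For time-periodic $d$, monotonicity only shows that $\|d(\cdot,t)\|_{L^1}$ is constant. You also need two facts.
\begin{itemize}
\item Strict decrease whenever $d(\cdot,0)$ changes sign. Evolve $d^{+}(\cdot,0)$ and $d^{-}(\cdot,0)$ separately under the same linear conservation-form equation. The strong maximum principle makes both strictly positive at $t=1$. Hence $\|d(\cdot,1)\|_{L^1}<\|d(\cdot,0)\|_{L^1}$, contradicting periodicity.
\item $\int_0^1 d\,dx=0$, which forces a sign change unless $d\equiv0$.
\end{itemize}
Completed this way, it is a uniqueness proof that does not use the Cole--Hopf structure at all.

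\textbf{What your route does not give.} The constants $a,b,b_k$ come from a problem with diffusion $\varepsilon$ and potential $F/(2\varepsilon)$, and they deteriorate exponentially as $\varepsilon\to0$. So your argument settles Proposition \ref{prop1} for each fixed $\varepsilon>0$. It yields none of the $\varepsilon$-uniform bounds of Proposition \ref{prop2}, which is the part of \cite{M} the present paper compares itself with.
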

		
	\begin{prop}{(\cite{M})}\label{prop2}
		Assume that $f=F_x(x)$ depends on $x$ only. Then for any $\varepsilon>0$, the solution presented in Proposition \ref{prop1} converge, for $ t \rightarrow \infty$, to a unique 1-periodic solution $U(x,\varepsilon):=u_{\infty}^{(\varepsilon)}(x)$ of the steady viscous Burgers equation \eqref{04} satisfying
		$$\int_0^1U dx=c.$$
		Moreover, there is a constant $M>0$ which does not depend on $\varepsilon$ such that
		\begin{equation*}
			|U| \leq M , U_x \leq M,  \int_0^1|U_x| dx \leq M.
		\end{equation*}
	\end{prop}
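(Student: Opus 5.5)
The statement has three parts: existence and uniqueness of the steady solution, convergence of the time-dependent solution to it as $t\to\infty$, and bounds uniform in $\varepsilon$. I would treat them in that order, working throughout with the periodic solution $u$ from Proposition \ref{prop1}, now with $F=F(x)$.

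\textbf{Steady problem.} Integrating \eqref{04} once gives the Riccati equation
$$\varepsilon U_x=\tfrac12 U^2-F(x)-k,$$
where $k$ is a free constant. I would look for $1$-periodic solutions of this ODE with $\int_0^1U\,dx=c$.
\begin{itemize}
\item For each $k$, the period map $U(0)\mapsto U(1)$ is monotone. It has a periodic orbit once $k$ is large, obtained by trapping between the constant sub/supersolutions $\pm\sqrt{2(\max|F|+k)}$.
\item The mean of the periodic orbit depends continuously and monotonically on $k$ and on the choice of branch, so one can adjust $k$ to reach any prescribed $c$.
\item For uniqueness: if $U_1,U_2$ are two such solutions, then $w=U_1-U_2$ satisfies the linear equation $\varepsilon w_x=\tfrac12(U_1+U_2)w+(k_2-k_1)$. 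A comparison argument for this linear equation, combined with $\int_0^1 w\,dx=0$, forces $w\equiv0$.
\end{itemize}

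\textbf{Convergence as $t\to\infty$.} Set $w=u-U$. It satisfies the linear conservation law
$$w_t-\varepsilon w_{xx}+\big(\tfrac12(u+U)w\big)_x=0,$$
and $\int_0^1 w\,dx=0$ for all $t$.
\begin{itemize}
\item Multiplying by $\operatorname{sgn}w$ (via a smooth regularization) gives $\frac{d}{dt}\|w(t)\|_{L^1}\le 0$.
\item The decrease is strict unless $w$ has one sign. With zero mean, that happens only when $w\equiv0$.
\item Parabolic smoothing, together with the $L^\infty$ bound from the maximum principle for $u$, makes the orbit $\{u(\cdot,t)\}_{t\ge1}$ precompact in $C^1$.
\item A LaSalle-type argument then finishes: every $\omega$-limit point $\bar u$ has the same $L^1$ distance to $U$ as all the others. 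Running the flow from $\bar u$ keeps this distance constant, which forces $\bar u=U$. Hence $u(\cdot,t)\to U$.
\end{itemize}
I expect this to be the main obstacle. In particular, the strict decrease of $\|w\|_{L^1}$ needs the strong maximum principle applied to the linear equation to rule out sign-changing $w$ with constant $L^1$ norm.

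\textbf{Bounds uniform in $\varepsilon$.} These follow from an Oleinik-type one-sided estimate. Let $v=U_x$. Differentiating the Riccati equation gives
$$\varepsilon v_{xx}=Uv_x+v^2-F_{xx}.$$
At a maximum point of $v$ we have $v_x=0$ and $v_{xx}\le0$, hence $v^2\le F_{xx}$ there. Therefore
$$U_x\le \sqrt{\max\big(F_{xx}\big)^+}=:M_1,$$
independently of $\varepsilon$. Periodicity gives $\int_0^1U_x\,dx=0$, so
$$\int_0^1|U_x|\,dx=2\int_0^1 (U_x)^+\,dx\le 2M_1.$$
Finally, $U$ takes the value $c$ somewhere, so
$$|U(x)-c|\le\int_0^1|U_x|\,dx\le 2M_1,$$
which gives $|U|\le|c|+2M_1$. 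Taking $M=\max(M_1,\,2M_1,\,|c|+2M_1)$ yields all three bounds. This $M$ depends on $c$ and $F$ but not on $\varepsilon$.
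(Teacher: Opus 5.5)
The paper does not prove this proposition; it is quoted from Jauslin--Kreiss--Moser, so your argument has to stand on its own. Most of it does. The $\varepsilon$-uniform bounds are correct: the identity $\varepsilon v_{xx}=Uv_x+v^2-F_{xx}$ and the maximum-point argument give $U_x\le\sqrt{(\max F_{xx})^+}$, and the bounds on $\int_0^1|U_x|\,dx$ and $|U|$ follow as you say. Uniqueness is also fine. If $k_1\neq k_2$, every zero of $w$ is a transversal crossing in the same direction, which is impossible for a periodic $w$ that must vanish somewhere. If $k_1=k_2$, $w$ vanishes nowhere unless $w\equiv0$.

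The genuine gap is \emph{existence} of a steady solution for every mean $c$.

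First, your barriers trap nothing. At $U=\pm A$, $A=\sqrt{2(\max|F|+k)}$, you get $\varepsilon U_x=\max|F|-F\ge0$ at both ends. So the field points upward at both, and $[-A,A]$ is invariant in neither time direction. The correct construction pairs $\pm A$ with $\pm B$, $B=\sqrt{2(\min F+k)}$: then $[-A,-B]$ is forward invariant and $[B,A]$ is backward invariant. But this needs $k>-\min F$.

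Second, ``adjust $k$'' breaks down exactly where it matters. The Cole--Hopf substitution $U=-2\varepsilon\phi'/\phi$ turns the Riccati equation into $2\varepsilon^2\phi''=(F+k)\phi$, and a solution with mean $0$ corresponds to a positive periodic $\phi$. If $k\ge-\min F$, this $\phi$ is convex and periodic, hence constant, which forces $F$ to be constant. So for nonconstant $F$ the solution with $c=0$ lives at some $k<-\min F$, where no constant barriers exist. Nothing in your sketch produces periodic orbits in that range, or shows that the means fill $\mathbb{R}$.

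You could repair this with Floquet theory: the mean equals $-2\varepsilon\log\mu$ with $\mu$ a positive Floquet multiplier, and then you apply the intermediate value theorem in $k$. The cheapest fix, though, is Proposition \ref{prop1} itself. For $t$-independent $F$, $u(x,t+\tau)$ is again a $\mathbb{Z}^2$-periodic solution with mean $c$ for every real $\tau$. Uniqueness then forces $u_t\equiv0$, so $u$ is a steady solution of \eqref{04} with mean $c$. Read literally, this also makes the convergence in the statement trivial; your LaSalle argument is needed only for the stronger claim that arbitrary solutions with mean $c$ converge.

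A smaller point in that LaSalle argument: the maximum principle alone gives no time-uniform $L^\infty$ bound for the forced equation, since $\frac{d}{dt}\max u\le\max F_x$ allows linear growth. Apply your Oleinik computation to the evolution instead. Here $v=u_x$ satisfies $v_t-\varepsilon v_{xx}+uv_x+v^2=F_{xx}$, so $m(t)=\max_x v\ge0$ obeys $m'\le(\max F_{xx})^+-m^2$. Hence $m$ is bounded for $t\ge1$ independently of the data. Together with $\int_0^1u_x\,dx=0$ and $\int_0^1u\,dx=c$, this bounds $|u|$ uniformly for $t\ge1$, which is what your compactness step needs.
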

	
	It should be remarked that the periodic solution presented in Proposition \ref{prop2} is uniformly bounded with respect to the viscosity $\varepsilon$. Further studies on the periodic solutions to equation \eqref{03} or \eqref{04} are referred to \cite{E}, \cite{Chen}, \cite{V} and \cite{Z}. We also mention the results in \cite{FTKS}, \cite{Cy} and \cite{CZ} which are concerned with computer-assisted researches.

	In this paper, we consider periodic solutions to the following one-dimensional steady Burgers equation
	\begin{equation}\label{1.1}
		uu_{x}-\varepsilon u_{xx}=f(x), ~x \in [0,2\pi]
	\end{equation}
	where $\varepsilon\ge 0$ is the viscosity coefficient, $u=u(x)$  is the unknown function satisfying
	$$u(0)=u(2\pi).$$
	And $f(x)$ is a given $2\pi$-periodic function  satisfying
	\begin{equation}\label{1.1+}
		\int_{0}^{2\pi} f(x) dx=0.
	\end{equation}
	The aim of this paper is to construct  periodic solutions to the steady Burgers equation\eqref{1.1}, which are uniformly bounded with respect to the viscosity $\varepsilon$. The approach is different from that in \cite{M}.
	Denote the solutions of equation \eqref{1.1} by $u^{\varepsilon}(x)$, which  depend on $\varepsilon$ in general.
	Then we construct the solutions to \eqref{1.1} with the following formal expansion
	\begin{equation}\label{1.2+}
		u^\varepsilon (x)=u_0(x)+\varepsilon u_1(x)+\cdots+\varepsilon ^nu_n(x)+\cdots,
	\end{equation}
where $u_0(x)$ is a solution to the non-viscous steady Burgers equation with the external force $f(x)$, that is, $u_0u_{0x}=f$.
	Substituting \eqref{1.2+} into \eqref{1.1}, we can obtain recurrence relations between the approximate solutions $u_n(x) (n=0,1,\cdots)$ as follows
	\begin{equation}\label{0622-1}
		u_0(x)u_k(x)=u_{k-1}^\prime(x)-\frac{1}{2}\sum_{i+j=k}u_i(x)u_j(x) , ~k=1,2,\cdots.
	\end{equation}
	In \cite{JC}, starting from $u_0=-(2+\cos x)$, the authors   constructed the periodic solutions to the viscous steady Burgers equation with external force $f=u_0(x)u_{0x}=-2\sin x-\sin x\cos x$. Moreover, every $u_n(x) (n=1,2,\cdots)$ is explicitly expressed in \cite{JC}. In this paper, we relax the specific restrictions on $u_0$ in \cite{JC} and construct the solutions starting from the general $u_0$ and  $f$  which satisfy  $u_0(x)u_{0x}=f$.	 In other words, given a periodic function $f(x)$ satisfying $\int_0^{2\pi} f(x) dx=0$,  we first construct  $u_0(x)$  satisfying $u_0(x)<0$ and
	$$u_0u_{0x}=f(x).$$
	Then according to the recurrence relations \eqref{0622-1}, for every positive integer $n$, we can solve out  $u_n(x)$ for $n=1, 2, \cdots$, which are smooth functions with the period $2\pi$. Based on these construction, we will show  that for any positive integer $n \in \mathrm{{N}_{+}}$, the solution to equation \eqref{1.1} admits the following expansion
	$$u^{\varepsilon}(x)=u_0(x)+\varepsilon u_1(x)+\cdots+\varepsilon^n u_n(x)+o(\varepsilon^n).$$
	Our main result can be stated as follows.
	\begin{theorem}\label{th-1.1}
		There exists a $\varepsilon_0>0$, which depends on $n$, such that for any $0<\varepsilon<\varepsilon_0$, the equation \eqref{1.1} has a periodic solution $u^\varepsilon(x) \in C^2([0,2\pi])$,  satisfying
		$$|u^\varepsilon(x)-u_0(x)-\varepsilon u_1(x)-\cdots-\varepsilon^nu_n(x)| \leq C\varepsilon^{n+1}$$
		for all $x\in [0,2\pi]$, where $C>0$ is a constant which depends on $n$, but is independent of $\varepsilon$.
	\end{theorem}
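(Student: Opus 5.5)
The plan is to construct the approximate solution from the recurrence \eqref{0622-1} and then find an exact solution near it by a contraction argument for the remainder. First we fix $u_0$. Set $F(x)=\int_0^x f(s)\,ds$, which is $2\pi$-periodic by \eqref{1.1+}, and put $u_0=-\sqrt{2(F(x)+c)}$ with $c>-\min F$. Then $u_0$ is smooth, periodic, satisfies $u_0u_{0x}=f$, and obeys $u_0\le -\delta<0$ for some $\delta>0$. Since $u_0$ never vanishes, \eqref{0622-1} can be solved inductively. Moving the $i=0,j=k$ and $i=k,j=0$ terms to the left-hand side, the recurrence reads
\[
u_0u_k=u_{k-1}'-\tfrac12\sum_{i+j=k,\ 1\le i,j\le k-1}u_iu_j,
\]
so $u_k$ is the right-hand side divided by $u_0$. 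By induction each $u_k$ is smooth and $2\pi$-periodic. Let $U_n=\sum_{k=0}^{n}\varepsilon^ku_k$. Substituting this into \eqref{1.1} and using the recurrence for $k\le n$, the residual
\[
R_n:=U_nU_{n,x}-\varepsilon U_{n,xx}-f
\]
contains only terms of order $\varepsilon^{n+1}$ and higher. Its coefficients are derivatives of polynomials in the $u_k$, so $\|R_n\|_{C^0}\le C_n\varepsilon^{n+1}$ for $\varepsilon\le1$.

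Next we write $u^\varepsilon=U_n+w$ and integrate once. The remainder satisfies
\[
\varepsilon w_x-U_nw=\tfrac12 w^2+G_n+\kappa,
\]
where $G_n$ is a periodic primitive of $R_n$ of size $O(\varepsilon^{n+1})$. A primitive exists because $\int_0^{2\pi}R_n\,dx=0$: $R_n$ is an exact derivative of periodic functions, using \eqref{1.1+}. The constant $\kappa$ is free, and we simply take $\kappa=0$. The linear operator $Lw=\varepsilon w_x-U_nw$ has the coefficient $-U_n\ge\delta/2>0$ once $\varepsilon$ is small. Hence for periodic $g$ the equation $Lw=g$ has a unique periodic solution given by the integrating-factor formula
\[
w(x)=\frac1\varepsilon\int_{-\infty}^{x}\exp\Big(\frac1\varepsilon\int_s^xU_n\Big)g(s)\,ds,
\]
read periodically. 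Because $\int_s^xU_n\le-\tfrac{\delta}{2}(x-s)$, this kernel has total mass at most $2/\delta$ uniformly in $\varepsilon$. This gives the key estimate $\|L^{-1}g\|_{C^0}\le (2/\delta)\|g\|_{C^0}$, uniform in $\varepsilon$, which I expect to be the main point of the proof. The sign condition $u_0<0$ is exactly what makes the integral well defined and uniformly bounded, so no boundary layer forms.

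Finally we set up the fixed point. Define $\Phi(w)=L^{-1}\big(\tfrac12w^2+G_n\big)$ on the ball $B=\{w \text{ periodic continuous}:\|w\|_{C^0}\le 2K\varepsilon^{n+1}\}$, where $K=(2/\delta)\sup\|G_n\|/\varepsilon^{n+1}$. For $\varepsilon<\varepsilon_0(n)$ we get
\[
\|\Phi(w)\|\le \tfrac{2}{\delta}\big(\tfrac12(2K\varepsilon^{n+1})^2+\|G_n\|\big)\le 2K\varepsilon^{n+1},
\]
and
\[
\|\Phi(w)-\Phi(v)\|\le \tfrac{2}{\delta}\cdot 2K\varepsilon^{n+1}\|w-v\|\le \tfrac12\|w-v\|.
\]
So $\Phi$ maps $B$ into itself and is a contraction, and Banach's fixed point theorem gives a periodic $w$. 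Bootstrapping from the equation, $w_x=\varepsilon^{-1}(\ldots)$ is continuous, and differentiating once more shows $w\in C^2$. Then $u^\varepsilon=U_n+w$ solves the integrated equation, hence \eqref{1.1} after differentiation. It satisfies $u^\varepsilon(0)=u^\varepsilon(2\pi)$ and $|u^\varepsilon-U_n|\le 2K\varepsilon^{n+1}$, which is the estimate in Theorem \ref{th-1.1}.
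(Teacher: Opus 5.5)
Your proof is correct, and it takes a genuinely different route from the paper.

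\emph{The paper's route.} It rescales to $y=x/\varepsilon$, so the remainder $g$ is of order one, and integrates once to get the Riccati-type problem \eqref{1.11}. Periodicity is then treated as a shooting problem. With $g(0)=\alpha_0$ fixed and very negative, upper and lower solutions give global existence and a uniform bound on $[0,\infty)$. The condition $g(0)=g(2\pi/\varepsilon)$ is then imposed by the implicit function theorem for $\Phi(\lambda,\beta_0)$, $\varepsilon=\lambda^2$. This requires first extending $\Phi$ and $\partial_{\beta_0}\Phi$ to $\lambda=0$ through Laplace-type estimates of the Duhamel formula \eqref{07092}.

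\emph{Your route.} You stay in the variable $x$ and put periodicity into the function space. The whole shooting-and-limit procedure is replaced by one uniform estimate, $\|L^{-1}\|_{C^0\to C^0}\le 2/\delta$ for $L=\varepsilon\partial_x-U_n$, closed by Banach's fixed point theorem on a ball of radius $O(\varepsilon^{n+1})$.

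\emph{Common mechanism.} Both arguments rest on the fact that $u_0<0$ makes the linearized flow $\varepsilon w_x=U_nw$ decay at a rate of order $1/\varepsilon$. In the paper this is the factor $e^{u_0(0)y}$ that yields $\Phi(0,\beta_0)=\beta_0$. The free parameter is used in opposite ways. Since $\beta_0$ enters \eqref{1.11} only through the constant term, it is exactly your integration constant $\kappa$. The paper fixes the initial value and tunes this constant, while you fix $\kappa=0$ and let $L^{-1}$ select the unique periodic initial value.

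\emph{What each buys.} Your argument is shorter and avoids the most delicate step of the paper, the joint continuity of $\Phi$ and $\partial_{\beta_0}\Phi$ up to $\lambda=0$. It also gives local uniqueness in the ball, smoothness by bootstrapping, and an explicit $\varepsilon_0$. The paper's argument is a more hands-on ODE construction that identifies the solution by explicit initial data $(\alpha_0,\beta_0(\lambda))$.

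\emph{A cosmetic point.} The recurrence you display is the right one: it is \eqref{1.6}, i.e.\ \eqref{0622-1} with the sum taken over $i,j\ge1$. However, moving the $i=0$ and $j=0$ terms out of a sum over $i,j\ge0$ would leave $2u_0u_k$ on the left, so state the formula directly rather than deriving it that way.
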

	
	\begin{remark}
		In comparison  with Proposition \ref{prop2}, we present an explicit constructive procedure for the periodic solutions and derive a sharper convergence rate.
	\end{remark}
	
	To prove Theorem \ref{th-1.1},
	we introduce an error function $g(\frac{x}{\varepsilon})$ satisfying
	$$u^{\varepsilon}(x)=u_0(x)+\varepsilon u_1(x)+\cdots+\varepsilon^n u_n(x)+\varepsilon^{n+1}g(\frac{x}{\varepsilon}).$$
	Then it follows that
	\begin{equation}\label{a}
		\begin{aligned}
			&-g^{\prime\prime}(\frac{x}{\varepsilon})+\varepsilon^{n+1} g(\frac{x}{\varepsilon})g^{\prime}(\frac{x}{\varepsilon})+
			\big(u_0(x)+\varepsilon u_1(x)+\varepsilon^2 u_2(x)+\dots+\varepsilon^n u_n(x)\big)g^{\prime}(\frac{x}{\varepsilon})+\\
			&\big(\varepsilon u_0^\prime(x)+\varepsilon^2u_1^\prime(x)+\varepsilon^3 u_2^{\prime}(x)+\dots+\varepsilon^{n+1} u_n^{\prime}(x)\big) g(\frac{x}{\varepsilon})+\\
			&\sum_{k=1}^{n}{\varepsilon^k(u_k(x)u_n^\prime(x)+u_{k+1}(x)u_{n-1}^\prime(x)+\dots+u_n(x)u_k^\prime(x))}-\varepsilon u_n^{\prime\prime}(x)=0.
		\end{aligned}
	\end{equation}
	Let
	$$y=\frac{x}{\varepsilon}.$$
	It deduces that
	\begin{equation}\label{b}
		\begin{aligned}
			&-g^{\prime\prime}(y)+\varepsilon^{n+1} g(y)g^{\prime}(y)
			+\big(u_0(\varepsilon y)+\varepsilon u_1(\varepsilon y)+\dots+\varepsilon^n u_n(\varepsilon y)\big)g^{\prime}(y)\\
			&+\big(\varepsilon u_0^\prime(\varepsilon y)+\varepsilon^2u_1^\prime(\varepsilon y)+\varepsilon^3 u_2^{\prime}(\varepsilon y)+\dots+\varepsilon^{n+1} u_n^{\prime}(\varepsilon y)\big) g(y)
			+F(\varepsilon y)=0.
		\end{aligned}
	\end{equation}
	To prove Theorem \ref{th-1.1}, we only need to prove
	\begin{theorem}\label{th-1.3}
		There exists a $\varepsilon_{0}>0$ such that for any $0< \varepsilon <\varepsilon_{0}$, the equation \eqref{b}
		has a solution $g(y) \in C^2([0,\frac{2\pi}{\varepsilon}])$, satisfying $$g(0)=g(\frac{2\pi}{\varepsilon})$$
		and
		$$|g| \leq C,$$
		where $C>0$ is a constant which depends on $n$, but is independent of $\varepsilon$.
	\end{theorem}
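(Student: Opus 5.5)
We treat \eqref{b} as a small perturbation of a linear, uniformly advective ODE and solve it by a contraction argument in the space of $\frac{2\pi}{\varepsilon}$-periodic continuous functions. Set $a_\varepsilon(y)=u_0(\varepsilon y)+\varepsilon u_1(\varepsilon y)+\dots+\varepsilon^n u_n(\varepsilon y)$. Then the linear part of \eqref{b} is exactly
\[
Lg:=-g''+(a_\varepsilon g)'.
\]
The coefficient of $g$ in \eqref{b} is $\frac{d}{dy}a_\varepsilon(y)$, since each $\varepsilon^{k+1}u_k'(\varepsilon y)=\frac{d}{dy}\varepsilon^k u_k(\varepsilon y)$. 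Likewise $\varepsilon^{n+1}gg'=\frac{\varepsilon^{n+1}}{2}(g^2)'$. So \eqref{b} is in divergence form and can be integrated once:
\[
-g'+a_\varepsilon g+\tfrac{\varepsilon^{n+1}}{2}g^2+\Phi(\varepsilon y)=\kappa,
\]
where $\Phi$ is an antiderivative of $F$ rescaled in $y$, and $\kappa$ is a constant.

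\textbf{Step 1 (the forcing is small and periodic).} From \eqref{a}, $F(x)=\varepsilon G(x)$ with $G$ smooth, $2\pi$-periodic, and bounded independently of $\varepsilon$. Moreover $F$ is itself an exact $x$-derivative: it is built from the unmatched terms $(\frac12\sum u_iu_j)'$ and $u_n''$. We should check this from the recurrence \eqref{0622-1}, and it seems to be the reason the terms appear in that form. Writing $F(x)=\varepsilon H'(x)$ in $x$, we get in $y$ that $F(\varepsilon y)=\frac{d}{dy}H(\varepsilon y)$ with $H$ bounded. Hence $\Phi(\varepsilon y)=H(\varepsilon y)$ is $\frac{2\pi}{\varepsilon}$-periodic and $O(1)$, and the first-order equation above makes sense with periodic data.

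\textbf{Step 2 (linear first-order problem).} Since $u_0<0$ on $[0,2\pi]$, for $\varepsilon$ small we have $a_\varepsilon\le -\delta<0$ uniformly. Consider $-h'+a_\varepsilon h=q$ with $q$ periodic. It has a unique periodic solution, given by integrating backward against the decaying kernel:
\[
h(y)=\int_y^{\infty} e^{\int_y^s a_\varepsilon}\,q(s)\,ds .
\]
Interpreted periodically, this is bounded by $\|h\|_\infty\le \delta^{-1}\|q\|_\infty$, uniformly in the period length. This uniform bound is the key estimate; it is independent of $\varepsilon$ precisely because the exponential decay rate $\delta$ does not depend on the (long) period $2\pi/\varepsilon$. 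We choose $\kappa=0$ (any fixed constant works) and differentiate back afterwards to recover \eqref{b}.

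\textbf{Step 3 (fixed point).} Define the map
\[
T(g)=\text{the periodic solution } h \text{ of } -h'+a_\varepsilon h=-\Phi-\tfrac{\varepsilon^{n+1}}{2}g^2 .
\]
On the ball $\|g\|_\infty\le C:=2\delta^{-1}\|H\|_\infty$ we get $\|Tg\|\le \delta^{-1}(\|H\|+\varepsilon^{n+1}C^2/2)\le C$ for small $\varepsilon$. The map is also a contraction there, with Lipschitz constant $\delta^{-1}\varepsilon^{n+1}C$. Its fixed point is $C^1$, and bootstrapping gives $C^2$. Differentiating the first-order equation gives \eqref{b}, and $g(0)=g(2\pi/\varepsilon)$ holds by construction.

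The main obstacle I expect is Step 1: showing that $F$ is an exact derivative of a bounded periodic function. If it fails, $F$ must still have zero mean over a period. Then I would instead take $\Phi(\varepsilon y)=\int_0^{y}F(\varepsilon s)\,ds=\varepsilon^{-1}\int_0^{\varepsilon y}F$, which is $O(1)$ because $F=O(\varepsilon)$, and periodic provided $\int_0^{2\pi}F=0$. That zero mean should follow from \eqref{0622-1} together with the periodicity of all $u_k$.
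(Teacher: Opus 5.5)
Your approach is correct and takes a different route from the paper, but the formula in Step 2 has a sign error that needs fixing.

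\textbf{The Step 2 formula.} Differentiating $h(y)=\int_y^\infty e^{\int_y^s a_\varepsilon}q(s)\,ds$ gives $h'=-q-a_\varepsilon h$. So it solves $-h'-a_\varepsilon h=q$, not $-h'+a_\varepsilon h=q$. Since $a_\varepsilon\le-\delta$, the equation $h'=a_\varepsilon h-q$ is contracting forward in $y$. The bounded solution of the periodically extended problem must therefore integrate from the past:
\[
h(y)=-\int_{-\infty}^{y}e^{\int_s^y a_\varepsilon(\tau)\,d\tau}\,q(s)\,ds .
\]
This $h$ is periodic. It is the unique periodic solution, because the monodromy factor satisfies $e^{\int_0^{2\pi/\varepsilon}a_\varepsilon}<1$. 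It also satisfies $\|h\|_\infty\le\delta^{-1}\|q\|_\infty$ independently of the period. Alternatively, at an extremum $y^*$ of the periodic $h$ one has $h(y^*)=q(y^*)/a_\varepsilon(y^*)$, which gives the same bound. With this correction, Steps 2--3 go through exactly as you wrote them.

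\textbf{Step 1 is immediate.} Your concern there dissolves without using the recurrence. By symmetry, $\sum_{i+j=m}u_iu_j'=\frac12\big(\sum_{i+j=m}u_iu_j\big)'$. Hence $F=\varepsilon H'$ with $H=\frac12\sum_{k=1}^{n}\varepsilon^{k-1}\sum_{i+j=n+k,\,i,j\le n}u_iu_j-u_n'$. This $H$ is smooth, $2\pi$-periodic, and bounded uniformly for $\varepsilon\le 1$.

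\textbf{Comparison with the paper.} The paper also integrates once (see \eqref{1.11}), but then argues by shooting:
\begin{enumerate}
\item It fixes $g(0)=\alpha_0<-X_0$ and treats $\beta_0=g'(0)$, which enters as the integration constant, as the unknown.
\item It obtains global existence and the bound $-2M_1/m_0\le g<0$ from upper and lower solutions for $w=e^{-\int a_\varepsilon}g$.
\item It closes $g(2\pi/\lambda^2)=\alpha_0$ by the implicit function theorem applied to $\Phi(\lambda,\beta_0)$. This requires extending $\Phi$ and $\partial_{\beta_0}\Phi$ continuously to $\lambda=0$, via comparison with the frozen-coefficient solution $\tilde g$ and the limits \eqref{07093}--\eqref{07095}.
\end{enumerate}
You instead fix the integration constant and build periodicity into the function space. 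You then use two facts: the linearization $-h'+a_\varepsilon h$ has an inverse bounded by $\delta^{-1}$ uniformly in the long period, and the nonlinearity carries the factor $\varepsilon^{n+1}$.

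Your route is shorter. It gives the explicit bound $C=2\|H\|_\infty/\delta$ and uniqueness in the ball, and it avoids the delicate $\lambda\to 0$ extension entirely. The paper's route instead produces solutions with a prescribed value $g(0)=\alpha_0$ and a locally unique slope $\beta_0(\lambda)$, using only scalar comparison arguments.
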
	
	To prove Theorem \ref{th-1.3}, we  construct the solution to the initial value problem of \eqref{b} and consider the following Cauchy problem
	\begin{equation}\label{c}
		\left\{
		\begin{array}{l}
			\begin{aligned}
				&-g^{\prime\prime}(y)+\varepsilon^{n+1} g(y)g^{\prime}(y)+\big(u_0(\varepsilon y)+\varepsilon u_1(\varepsilon y)+\dots+\varepsilon^n u_n(\varepsilon y)\big)g^{\prime}(y)\\
				&+\big(\varepsilon u_0^\prime(\varepsilon y)+\varepsilon^2u_1^\prime(\varepsilon y)+\varepsilon^3 u_2^{\prime}(\varepsilon y)+\dots+\varepsilon^{n+1} u_n^{\prime}(\varepsilon y)\big) g(y)
				+F(\varepsilon y)=0,\\
				&g(0)=\alpha_0,\\
				&g^{\prime}(0)=\beta_0,
			\end{aligned}	
		\end{array}
		\right.
	\end{equation}
	where $\alpha_0$ and $\beta_0$ are some given constants. It is easy to obtain that the Cauchy problem \eqref{c} has a unique solution $g(x) \in [0,T^\varepsilon)$, where $T^\varepsilon>0$ (maybe small) is a constant depending on $\varepsilon$ and $n$ .
	
	Then our subsequent proof consists two key ingredients. The first is to prove  the existence interval of solution to \eqref{c} is at least $[0,\frac{2\pi}{\varepsilon}]$ by using upper and lower solutions method. The second is to construct the periodic solution of \eqref{b} satisfying $g(0)= g(\frac{2\pi}{\varepsilon})$ by applying the  implicit function theorem. To this end, we denote  the solution of \eqref{c} by $g(y,\beta_0)$ and define
	$$\Phi(\varepsilon,\beta_0):=-g^{\prime}(\frac{2\pi}{\varepsilon},\beta_0)+\frac{\varepsilon^{n+1}}
	{2}g^2(\frac{2\pi}{\varepsilon},\beta_0)+\beta_0-\frac{\varepsilon^{n+1}}{2}\alpha_0^2.$$
	It is noted that the existence of periodic solution of \eqref{b} satisfying $g(0)= g(\frac{2\pi}{\varepsilon})$ is equivalent to  $\Phi(\varepsilon, \beta_0)=0$. Thus our goal is to apply the implicit function theorem to find  $\beta_0$ which depends  $\varepsilon$ such that $\Phi(\varepsilon, \beta_0)=0.$  The difficulty lies in  proving that $\Phi(\varepsilon,\beta_0)$  and $\frac{\partial \Phi}{\partial \beta_0}$  are continuous around $(0,0)$ and $\frac{\partial \Phi}{\partial \beta_0}(0,0)=1 \neq 0$ such that we are able to apply the implicit function theorem.
	
	The paper is organized as follows. In Section 2, we construct approximate solutions and state our main theorem. In Section 3, we establish the existence of periodic solutions for the first-order error equation and finish the proof of our main result.
	\section{Construction of the approximate solutions} \setcounter{equation}{0}
	\setcounter{theorem}{0}
	In this section, we construct the approximate solutions $u_0(x), u_1(x), \cdots, u_n(x), \cdots$ presented in \eqref{1.2+}.
	Considering the  one-dimensional steady viscous forced Burgers equationn\eqref{1.1} with \eqref{1.1+},  we choose $f(x)\le 0$ and denote the minimum value of $f(x)$  by $$f_0=\min_{x\in [0,2\pi]} f(x)  \leq 0.$$  Then for any $x$,
	$$\int_0^x f(t) dt \geq 2\pi f_0.$$
	Take $C=-4\pi f_0+1>0$ and let
	$$u_0^2(x)=2\int_0^x f(t) dt+C.$$ Then $u_0$ satisfies
	\begin{equation}\label{1.2}
		u_0(x)u_{0x}(x)=f(x).
	\end{equation}
	Letting
	$$u_0(x)=-\sqrt{2\int_0^x f(t) dt+C},$$
	then $u_0$ is a smooth function with the period $2\pi.$
	It is clear that there exist positive constants $0<m_0<M_0$ such that for any $x \in [0,2\pi]$
	$$-M_0 \leq u_0(x) \leq -m_0 <0.$$
	
	To construct the left approximate solutions $u_1(x), u_2(x),\cdots, u_n(x), \cdots$  in \eqref{1.2}, we
	substitute \eqref{1.2} into  \eqref{1.1} to obtain
	\begin{equation}\label{1.4}
		\begin{aligned}
			&\varepsilon[u_0(x)u_{1x}(x)+u_{0x}(x)u_1(x)-u_{0xx}(x)]\\
			+&\varepsilon^2[u_0(x)u_{2x}(x)+u_1(x)u_{1x}(x)+u_2(x)u_{0x}(x)-u_{1xx}(x)]\\
			+&\cdots\\
			+&\varepsilon^k[u_0(x)u_{kx}(x)+u_1(x)u_{(k-1)x}(x)+\cdots+u_{k-1}(x)u_{1x}(x)+u_k(x)u_{0x}(x)-u_{(k-1)xx}(x)]\\
			+&\cdots=0 .
		\end{aligned}
	\end{equation}
	Then according to the different orders  of $\varepsilon$, we have
	\begin{equation}\label{1.5}
		\left\{
		\begin{array}{ll}
			&u_0(x)u_{1x}(x)+u_{0x}(x)u_1(x)-u_{0xx}(x)=0 , \\
			&u_0(x)u_{2x}(x)+u_1(x)u_{1x}(x)+u_2(x)u_{0x}(x)-u_{1xx}(x)=0 , \\
			&\vdots\\
			&u_0(x)u_{kx}(x)+u_1(x)u_{(k-1)x}(x)+\cdots+u_k(x)u_{0x}(x)-u_{(k-1)xx}(x)=0 . \\
			&\vdots\\
		\end{array}
		\right.
	\end{equation}
	Integrating each equation in \eqref{1.5} with respect to $x$  yields
	\begin{equation}\label{1.6}
		\left\{
		\begin{array}{ll}
			&u_0(x)u_1(x)=u_0^\prime(x), \\
			&u_0(x)u_2(x)=u_1^\prime(x)-\frac{1}{2}u^2_1(x), \\
			&\vdots\\
			&u_0(x)u_{2k}(x)=u^\prime_{2k-1}(x)-[u_1(x)u_{2k-1}(x)+\cdots+u_{k-1}(x)u_{k+1}(x)+\frac{1}{2}u^2_k(x)], \\
			&u_0(x)u_{2k+1}(x)=u^\prime_{2k}(x)-[u_1(x)u_{2k}(x)+u_2(x)u_{2k-1}(x)+\cdots+u_k(x)u_{k+1}(x)], \\
			&\vdots\\
		\end{array}
		\right.
	\end{equation}
	where the constant  is taken to be $0$ during integration. It follows from \eqref{1.6} that for any positive integers $i$, $j$, and $k$,
	\begin{equation}\label{1.7}
		u_0(x)u_k(x)=u_{k-1}^\prime(x)-\frac{1}{2}\sum_{i+j=k}u_i(x)u_j(x) .
	\end{equation}
	Since $u_0(x)<0$ is smooth,  each  $u_k$ is a smooth function with the periodic  $2\pi$ and can be solved out  by  \eqref{1.7}.
	
	\section{Proof of main result}
	
	As mentioned in the introduction,  to prove Theorem \ref{th-1.1},
	we introduce an error function $g(\frac{x}{\varepsilon})$ satisfying
	$$u^\varepsilon (x)=u_0(x)+\varepsilon u_1(x)+\varepsilon^2 u_2(x)+\dots+\varepsilon^n u_n(x)+\varepsilon^{n+1}g(\frac{x}{\varepsilon})$$
	satisfies \eqref{1.1}, then the error function $g(\frac{x}{\varepsilon})$ satisfies
	\begin{equation*}
		\begin{aligned}
			-g^{\prime\prime}(\frac{x}{\varepsilon})+\varepsilon^{n+1} g(\frac{x}{\varepsilon})g^{\prime}(\frac{x}{\varepsilon})+
			\big(u_0+\dots+\varepsilon^n u_n\big)(x)g^{\prime}(\frac{x}{\varepsilon})+
			\big(\varepsilon u_0^\prime+\dots+\varepsilon^{n+1} u_n^{\prime}\big)(x) g(\frac{x}{\varepsilon})+\\
			\sum_{k=1}^{n}{\varepsilon^k\big(u_k(x)u_n^\prime(x)+u_{k+1}(x)u_{n-1}^\prime(x)+\dots+u_n(x)u_k^\prime(x)\big)}-\varepsilon u_n^{\prime\prime}(x)=0.
		\end{aligned}
	\end{equation*}
	Denote
	\begin{equation*}
		F(x)=\sum_{k=1}^{n}{\varepsilon^k\big(u_k(x)u_n^\prime(x)+u_{k+1}(x)u_{n-1}^\prime(x)+\dots+u_n(x)u_k^\prime(x)\big)}-\varepsilon u_n^{\prime\prime}(x),
	\end{equation*}
	$$F_1(x)=\int_0^xF(t) dt.$$
	Then $F(x)$ and $F_1(x)$ are smooth and bounded functions with the period $2\pi$.
	
	Let $\displaystyle{y=\frac{x}{\varepsilon}}$. It follows that
	\begin{equation}\label{1.8}
		\begin{aligned}
			-g^{\prime\prime}(y)+\varepsilon^{n+1} g(y)g^{\prime}(y)+\big(u_0\dots+\varepsilon^n u_n\big)(\varepsilon y)g^{\prime}(y)
			+\big(\varepsilon u_0^\prime+\dots+\varepsilon^{n+1} u_n^{\prime}\big)(\varepsilon y) g(y)
			+F(\varepsilon y)=0.
		\end{aligned}
	\end{equation}
	For the convenience of presentation, we set $\varepsilon=\lambda^2$. Then \eqref{1.8} becomes
	\begin{equation}\label{1.9}
		\begin{aligned}
			-g^{\prime\prime}(y)+\lambda^{2n+2} g(y)g^{\prime}(y)+\big(u_0+\dots+\lambda^{2n} u_n\big)(\lambda^2 y)g^{\prime}+\big(\lambda^2 u_0^\prime+\dots+\lambda^{2n+2} u_n^{\prime}\big)(\lambda^2 y) g+F(\lambda^2 y)=0.
		\end{aligned}	
	\end{equation}
	To prove Theorem \ref{th-1.1},	we only need to prove Theorem \ref{th-1.3}, which can be equivalently stated as
	\begin{theorem}\label{2}
		There exists $\lambda_{0}>0$ such that for any $0<\left| \lambda\right| <\lambda_{0}$, the equation \eqref{1.9}
		has a periodic solution $g(y) \in C^2([0,\frac{2\pi}{\lambda^2}])$, satisfying
		$$g(0)=g(\frac{2\pi}{\lambda^2})$$
		and
		$$|g| \leq C,$$
		where $C>0$ is a constant depending on $n$ but independent of $\varepsilon.$
	\end{theorem}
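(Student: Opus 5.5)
We will not work with the second-order equation \eqref{1.9} directly. Instead we integrate it once in $y$ and reduce it to a first-order Riccati equation, for which a periodic solution can be found by an invariant-interval argument. Write $\varepsilon=\lambda^2$ and $U^\varepsilon(x)=u_0(x)+\varepsilon u_1(x)+\cdots+\varepsilon^n u_n(x)$. The coefficient of $g$ in \eqref{1.9} is exactly $\frac{d}{dy}\big[U^\varepsilon(\varepsilon y)\big]$. Hence
$$\big(U^\varepsilon(\varepsilon y)\,g(y)\big)'=U^\varepsilon(\varepsilon y)g'(y)+\big(\varepsilon u_0'+\cdots+\varepsilon^{n+1}u_n'\big)(\varepsilon y)\,g(y).$$
Likewise $\varepsilon^{n+1}gg'=\big(\tfrac{\varepsilon^{n+1}}{2}g^2\big)'$ and $F(\varepsilon y)=\frac{d}{dy}G(y)$ with $G(y):=\varepsilon^{-1}F_1(\varepsilon y)$. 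So \eqref{1.9} is the $y$-derivative of
$$g'(y)=\frac{\varepsilon^{n+1}}{2}g^2(y)+U^\varepsilon(\varepsilon y)\,g(y)+G(y). \qquad (\ast)$$
Any $C^1$ solution of $(\ast)$ is automatically $C^2$ and solves \eqref{1.9}. This is consistent with the function $\Phi$ from the introduction, which is exactly the mismatch of the integrated quantity at the two endpoints.

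\textbf{Step 1: bounds on $G$.} Since every term of $F$ carries a factor $\varepsilon^k$ with $k\ge1$, we have $|F_1(x)|\le C_n\varepsilon$ on $[0,2\pi]$, so $|G(y)|\le K$ with $K$ independent of $\varepsilon$. We also need $G$ to be $\frac{2\pi}{\varepsilon}$-periodic, i.e.\ $F_1(2\pi)=\int_0^{2\pi}F\,dx=0$. This I expect to be the main (though algebraic) obstacle. The plan is to show that $F$ is an exact $x$-derivative of a $2\pi$-periodic function. It is the residual of the exact-derivative equation $\big(\tfrac12 u^2-\varepsilon u_x\big)_x=f$ after inserting the truncated expansion. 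Using the integrated relations \eqref{1.7}, each $\varepsilon^k$-coefficient should equal $\frac{d}{dx}\big(\tfrac12\sum_{i+j=k+n,\ i,j\le n}u_iu_j\big)$, and the term $-\varepsilon u_n''$ is $\frac{d}{dx}(-\varepsilon u_n')$. Since all $u_k$ are smooth and $2\pi$-periodic, the integral of $F$ over a period vanishes. If this bookkeeping failed, the fallback is to add a constant $c_\varepsilon=O(1)$ to $(\ast)$, chosen to absorb the mean of $F$; differentiating $(\ast)$ still yields \eqref{1.9}.

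\textbf{Step 2: an invariant interval.} Since $U^\varepsilon=u_0+O(\varepsilon)$ and $-M_0\le u_0\le -m_0<0$, for $\varepsilon$ small we have $U^\varepsilon\le -m_0/2$. Set $R=4K/m_0$. At $g=R$ the right side of $(\ast)$ satisfies
$$\frac{\varepsilon^{n+1}}{2}R^2-\frac{m_0}{2}R+K<0$$
once $\varepsilon^{n+1}R^2<K$. Symmetrically, it is $>0$ at $g=-R$. Hence, by a comparison (upper/lower solution) argument, the solution of $(\ast)$ with $g(0)=\alpha_0\in[-R,R]$ exists on all of $[0,\frac{2\pi}{\varepsilon}]$ and stays in $[-R,R]$. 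In particular, there is no blow-up despite the quadratic term.

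\textbf{Step 3: a periodic solution.} Define the Poincaré map $P(\alpha_0)=g(\frac{2\pi}{\varepsilon};\alpha_0)$. By continuous dependence on initial data it is continuous, and by Step 2 it maps $[-R,R]$ into itself, so it has a fixed point $\alpha_0^*$. For that solution, $g(0)=g(\frac{2\pi}{\varepsilon})$. Because $G$ and $U^\varepsilon(\varepsilon\,\cdot)$ are $\frac{2\pi}{\varepsilon}$-periodic, $(\ast)$ also gives $g'(0)=g'(\frac{2\pi}{\varepsilon})$. Thus $g$ extends to a periodic $C^2$ solution of \eqref{1.9} with $|g|\le R=4K/m_0$, a bound depending on $n$ but not on $\varepsilon$.

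Alternatively, $\partial_{\alpha_0}P=\exp\!\int_0^{2\pi/\varepsilon}\big(\varepsilon^{n+1}g+U^\varepsilon\big)\,dy\le e^{-\pi m_0/(2\varepsilon)}<1$, so $P$ is a contraction and the fixed point is unique. This plays the role of the nondegeneracy that the implicit-function approach via $\Phi$ is designed to provide.
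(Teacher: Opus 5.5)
Your proof is correct, and it takes a genuinely different route from the paper's.

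\textbf{The paper's route.} The paper fixes $g(0)=\alpha_0$, with $\alpha_0<-X_0$ very negative, and shoots in $\beta_0$, i.e.\ in the integration constant of \eqref{1.11}. Global existence and the bound $-2M_1/m_0\le g<0$ come from sub- and supersolutions for $w=e^{-\int_0^y U^\varepsilon(\varepsilon t)\,dt}\,g$ (Lemma~\ref{La6.2}). The return condition $\Phi(\lambda,\beta_0)=0$ is then solved by the implicit function theorem at $(\lambda,\beta_0)=(0,0)$. That forces the paper to extend $\Phi$ and $\partial_{\beta_0}\Phi$ continuously to $\lambda=0$. It does so by comparing $g$ with a frozen-coefficient solution over $[0,2\pi/\lambda^2]$, an interval whose length blows up (the estimates \eqref{07092}--\eqref{07095}). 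This singular limit is the delicate part of the paper's argument.

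\textbf{Your route.} You fix the integration constant to be zero and shoot in $g(0)$ instead. The constant barriers $\pm R$ make $[-R,R]$ forward invariant, so the Poincar\'e map is a continuous self-map of $[-R,R]$. The intermediate value theorem then gives the fixed point. The condition $g'(0)=g'(2\pi/\varepsilon)$ comes for free from the $2\pi/\varepsilon$-periodicity of the coefficients in $(\ast)$.

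\textbf{What your route buys.}
\begin{itemize}
\item It works at each fixed small $\lambda$, with no limit $\lambda\to0$.
\item It gives the explicit bound $|g|\le 4K/m_0$, and uniqueness in $[-R,R]$ via your variational formula.
\item Your Step~1 makes explicit two points the paper uses without comment. First, the primitive of $F(\lambda^2 y)$ in $y$ is $\lambda^{-2}F_1(\lambda^2 y)$, which is $O(1)$; the paper's \eqref{1.11} writes $F_1(\lambda^2 y)$. Second, $F_1(2\pi)=0$. This follows most quickly from $F=\varepsilon^{-n}\bigl[\bigl(\tfrac12 (U^\varepsilon)^2-\varepsilon\,(U^\varepsilon)'\bigr)'-f\bigr]$, with $U^\varepsilon$ periodic and $\int_0^{2\pi}f=0$.
\end{itemize}

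\textbf{What the paper's route buys.} It gives a solution through any prescribed $g(0)=\alpha_0<-X_0$, i.e.\ a family of periodic solutions corresponding to different integration constants. Your barrier argument yields such a family too: keep a fixed constant $c$ in $(\ast)$ and replace $K$ by $K+|c|$.

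\textbf{A small point.} The fallback you mention in Step~1 would not work. A nonzero mean of $F$ makes $G$ drift linearly in $y$, and no additive constant cancels that. It is never needed, however, because the exact-derivative identity holds.
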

	
	Now we prove Theorem \ref{2}. We first prove the existence of  the initial problem to equation \eqref{1.9}, which is
	\begin{lemma}
		Consider the Cauchy problem
		\begin{equation}\label{1.10}
			\left\{
			\begin{array}{l}
				\begin{aligned}
					&-g^{\prime\prime}(y)+\lambda^{2n+2} g(y)g^{\prime}(y)+\big(u_0+\dots+\lambda^{2n} u_n\big)(\lambda^2 y)g^{\prime}\\&+\big(\lambda^2 u_0^\prime+\dots+\lambda^{2n+2} u_n^{\prime}\big)(\lambda^2 y) g+F(\lambda^2 y)=0,\\
					&g(0)=\alpha_0,\\
					&g^{\prime}(0)=\beta_0.
				\end{aligned}	\\
			\end{array}
			\right.
		\end{equation}
		where $\lambda \neq 0$, $\alpha_0$ and $\beta_0$ are some given initial values. Then there exists a $T^{\varepsilon}>0$ such that  \eqref{1.10} has a unique solution $g(y) \in C^2([0,T^{\varepsilon}))$.
	\end{lemma}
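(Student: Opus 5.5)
This is a local existence and uniqueness statement for a second-order ODE. The plan is to reduce it to a first-order system and apply the Picard--Lindel\"of (Cauchy--Lipschitz) theorem. First fix $\lambda\neq 0$ and write
\[
a(y)=\big(u_0+\lambda^2u_1+\dots+\lambda^{2n}u_n\big)(\lambda^2 y),
\]
\[
b(y)=\big(\lambda^2u_0'+\dots+\lambda^{2n+2}u_n'\big)(\lambda^2 y),
\]
\[
h(y)=F(\lambda^2 y).
\]
By the construction in Section 2, each $u_k$ is smooth and $2\pi$-periodic: $u_0=-\sqrt{2\int_0^x f\,dt+C}$ is smooth since the radicand is at least $1$, and the $u_k$ are obtained from \eqref{1.7} by dividing by $u_0\le -m_0<0$. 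Since $F$ is also smooth and periodic, the coefficients $a,b,h$ are smooth and bounded on all of $\mathbb{R}$.

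Next set $v=g'$. Then \eqref{1.10} becomes the system
\[
g'=v,\qquad v'=\lambda^{2n+2}gv+a(y)v+b(y)g+h(y),
\]
with $(g,v)(0)=(\alpha_0,\beta_0)$. The right-hand side $G(y,g,v)$ is continuous in $y$ and is a polynomial of degree $2$ in $(g,v)$. It is therefore locally Lipschitz in $(g,v)$, uniformly for $y$ in compact sets. Concretely, on the box $|y|\le 1$, $|g-\alpha_0|\le 1$, $|v-\beta_0|\le 1$ we obtain explicit bounds $|G|\le M$ and Lipschitz constant $L$, depending on $\lambda$, $n$, $\alpha_0$, $\beta_0$ and the sup norms of $u_k,u_k',F$.

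Picard--Lindel\"of then gives a unique $C^1$ solution $(g,v)$ on $[0,T^\varepsilon)$ with, for example, $T^\varepsilon=\min\{1,1/M\}$; the Picard iteration map is a contraction in the sup norm on the box, after shrinking if needed or using a weighted norm. Uniqueness holds by Gronwall's inequality applied to the difference of two solutions. Since $g'=v\in C^1$, we get $g\in C^2([0,T^\varepsilon))$. One can also pass to the maximal interval of existence, which will be needed later.

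No real obstacle is expected. The only point to watch is that the quadratic term $\lambda^{2n+2}gg'$ makes the equation only locally Lipschitz. As a result, $T^\varepsilon$ may depend on $\lambda$ and on the initial data, and only local existence is asserted here. Extending the solution to $[0,2\pi/\lambda^2]$ is deferred to the later upper/lower solution argument.
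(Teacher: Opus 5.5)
Your proof is correct, but you reduce the problem differently from the paper. The paper does not pass to the system for $(g,g')$. Instead it uses the conservation structure of \eqref{1.10}: with $a(y)=(u_0+\dots+\lambda^{2n}u_n)(\lambda^2y)$ one has $a'(y)=(\lambda^2u_0'+\dots+\lambda^{2n+2}u_n')(\lambda^2y)$, so the equation reads $\frac{d}{dy}\big[-g'+\frac{\lambda^{2n+2}}{2}g^2+ag\big]+F(\lambda^2y)=0$. Integrating once from $0$ and absorbing $\beta_0$ into the constant gives the scalar Riccati problem \eqref{1.11}, with the single datum $g(0)=\alpha_0$, and Cauchy--Lipschitz is applied to that. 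The paper leaves implicit that this $C^1$ solution is $C^2$ and solves \eqref{1.10}; both follow by differentiating \eqref{1.11} back.

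Your phase-space route needs no structure at all, gives $g\in C^2$ and the equivalence with \eqref{1.10} directly, and avoids computing the first integral. Incidentally, $\int_0^yF(\lambda^2s)\,ds=\lambda^{-2}F_1(\lambda^2y)$, so the $F_1(\lambda^2y)$ term in \eqref{1.11} is off by a factor $\lambda^{-2}$; this is harmless for local existence.

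The paper's route is the one that pays off later, though. The scalar first-order form is what makes the upper/lower-solution comparison in Lemma~\ref{La6.2} work; scalar comparison arguments do not transfer directly to your $2\times 2$ system. The first integral is also what collapses the two periodicity conditions $g(2\pi/\lambda^2)=g(0)$ and $g'(2\pi/\lambda^2)=g'(0)$ into the single scalar equation $\Phi(\lambda,\beta_0)=0$. So if you keep your proof here, you would still need to derive \eqref{1.11} before the global-existence step.
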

	
	{\bf Proof.} Note that the Cauchy problem \eqref{1.10} is equivalent to the following first-order
	ordinary differential equation
	\begin{equation}\label{1.11}
		\left\{
		\begin{array}{ll}
			\begin{aligned}
				&-g^{\prime}(y)+\frac{\lambda^{2n+2}}{2} g^2(y)+\big(u_0+\dots+\lambda^{2n} u_n\big)(\lambda^2 y)g(y)\\
				&+F_1(\lambda^2 y)+\beta_0-\frac{\lambda^{2n+2}}{2}\alpha_0^2-\big(u_0+\dots+\lambda^{2n} u_n\big)(0)\alpha_0=0,\\
				&g(0)=\alpha_0.\\
			\end{aligned}	\\
		\end{array}
		\right.
	\end{equation}
	Then thanks to the classical Cauchy-Lipschitz theorem, there exists a $T^{\varepsilon}>0$ such that the Cauchy problem \eqref{1.11} admits a unique  solution $g(y)\in C^1[0,T^{\varepsilon})$.$\hfill\Box$
	
	It is remarked that the existence time  $T^{\varepsilon}>0$ in Lemma \ref{1.10} may be small. In the following, we show that under suitable conditions the result in Lemma \ref{1.10} holds for all $t\in [0,\infty)$, which is
	\begin{lemma}\label{La6.2}
		There exists $X_0>0$ and $\lambda_1>0$ such that for every $\alpha_0<-X_0,|\beta_0|<1,0<|\lambda|<\lambda_1$
		the Cauchy problem \eqref{1.11}
		admits a unique solution $g(y)\in C^1([0,+\infty))$. Moreover, the solution $g(y)$
		is  bounded  uniformly with respect to $\lambda$.
	\end{lemma}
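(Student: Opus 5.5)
The plan is to work directly with the first-order form \eqref{1.11} and trap the solution between two constant barriers, a lower solution and an upper solution, which stay valid for all $y\ge 0$. Write $U_\lambda(s)=\big(u_0+\lambda^2u_1+\dots+\lambda^{2n}u_n\big)(s)$ and
$$K=\beta_0-\frac{\lambda^{2n+2}}{2}\alpha_0^2-U_\lambda(0)\alpha_0 .$$
Then \eqref{1.11} reads
$$g'=h(y,g):=\frac{\lambda^{2n+2}}{2}g^2+U_\lambda(\lambda^2y)\,g+F_1(\lambda^2y)+K .$$

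\textbf{Step 1 (uniform bounds on the coefficients).} Since $-M_0\le u_0\le -m_0$ and the $u_k$ are smooth and $2\pi$-periodic, there is a $\lambda_2>0$ such that for $|\lambda|<\lambda_2$ we have $-2M_0\le U_\lambda\le -m_0/2$ everywhere. Also $F$ carries at least one factor $\varepsilon=\lambda^2$, and $F_1$ is periodic since the integrated equations \eqref{1.6} give $\int_0^{2\pi}F=0$. So for $|\lambda|\le 1$ there is a bound $|F_1|\le C_1$ independent of $\lambda$.

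\textbf{Step 2 (sign of $K$ and the upper barrier $g\equiv 0$).} Because $\alpha_0<0$ and $U_\lambda(0)<0$, we have $U_\lambda(0)\alpha_0\ge \frac{m_0}{2}|\alpha_0|$. With $|\beta_0|<1$ this gives $K\le 1-\frac{m_0}{2}|\alpha_0|$. Choosing $X_0$ with $\frac{m_0}{2}X_0> C_1+2$ makes $h(y,0)=F_1+K\le -1<0$ for all $y$. Hence a solution starting at $\alpha_0<0$ can never reach $g=0$: at a first touching point we would need $g'\ge 0$, but $g'=h(y,0)<0$ there.

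\textbf{Step 3 (the lower barrier $g\equiv -A$).} We also have $K\ge -1-\frac{\lambda^{2n+2}}{2}\alpha_0^2-2M_0|\alpha_0|$. For $g=-A$ with $A>0$,
$$h(y,-A)\ge \frac{m_0}{2}A-C_1-1-\frac{\lambda^{2n+2}}{2}\alpha_0^2-2M_0|\alpha_0| ,$$
using $\lambda^{2n+2}A^2\ge 0$. Take $A=\frac{4}{m_0}\big(2M_0|\alpha_0|+C_1+2\big)$, so $A>|\alpha_0|$, and restrict to $|\lambda|<\lambda_1\le\lambda_2$ with $\lambda_1^{2n+2}\alpha_0^2\le 1$. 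Then $h(y,-A)>0$, so the solution cannot cross $-A$ from above. Together with Step 2, the strip $-A<g<0$ is forward invariant.

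\textbf{Step 4 (global existence).} By the Cauchy--Lipschitz lemma above, the maximal solution exists on some $[0,T^\varepsilon)$ and stays in the strip. Since $h$ is smooth and bounded on $[0,\infty)\times[-A,0]$, the standard continuation criterion gives $T^\varepsilon=+\infty$. The bound $|g|\le A$ does not involve $\lambda$, which gives the uniform bound.

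The main obstacle is the lower barrier. The quadratic term $\frac{\lambda^{2n+2}}{2}g^2$ and the term $-\frac{\lambda^{2n+2}}{2}\alpha_0^2$ in $K$ are the only places where smallness of $\lambda$ is needed. This forces $\lambda_1$ to depend on $\alpha_0$, so I would either fix $\alpha_0$ (just below $-X_0$) as the later implicit-function argument requires, or restrict $\alpha_0$ to a bounded interval $[-2X_0,-X_0)$ to get a $\lambda_1$ that is uniform over admissible data. The same issue governs the sign of $K$ in Step 2, and taking $X_0$ large relative to $C_1/m_0$ handles it.
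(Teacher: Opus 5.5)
Your argument is correct, and it obtains global existence by a shorter route than the paper. The paper first substitutes $w=e^{-\int_0^y U_\lambda(\lambda^2t)\,dt}g$ and traps $w$ between $\overline{w}=\alpha_0/(1-\alpha_0y)$ and $\underline{w}=\alpha_0+\frac{M_1}{2M_0}(1-e^{2M_0y})$. This only excludes finite-time blow-up, since the resulting lower envelope for $g$ grows exponentially in $y$. The paper therefore proves the uniform bound $-\frac{2M_1}{m_0}\le g<0$ in a separate step, using $\overline{w}<0$ and the sign of $g'$ below $-2M_1/m_0$. Your Steps 2--3 are essentially that second step, made precise: you use a first-touching-point argument and check $-A<\alpha_0$, which the paper leaves implicit. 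Running it on the maximal interval $[0,T^\varepsilon)$ gives the a priori bound and global existence at once, so the $w$-comparison becomes redundant. Both proofs rest on the same input, $F_1+K<0$ once $|\alpha_0|$ is large. In both, the bound on $g$ necessarily depends on $\alpha_0$ (your $A$, the paper's $M_1$) but not on $\lambda$.

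Two remarks. First, the dependence of $\lambda_1$ on $\alpha_0$ is not forced. You discarded $\frac{\lambda^{2n+2}}{2}A^2$, but since $A>|\alpha_0|$ this term absorbs the bad one: $\frac{\lambda^{2n+2}}{2}(A^2-\alpha_0^2)>0$. Hence $h(y,-A)\ge\frac{m_0}{2}A-C_1-1-2M_0|\alpha_0|>0$ for every $|\lambda|<\lambda_2$. So $\lambda_1=\lambda_2$ works for all $\alpha_0<-X_0$ simultaneously, as the lemma's quantifier order requires, and you need no restriction of $\alpha_0$ to a bounded interval.

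Second, keep your observation that $F$ carries a factor $\lambda^2$. Integrating \eqref{1.9} in $y$ actually yields $\lambda^{-2}F_1(\lambda^2y)$ rather than $F_1(\lambda^2y)$ in \eqref{1.11}. That factor is exactly what keeps the forcing bounded uniformly in $\lambda$, so your barriers survive this correction unchanged.
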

	{\bf Proof.} By the boundedness and smoothness of $u_1,u_2\dots u_n$, and $-M_0 \leq u_0(y)\leq -m_0<0$, there exist $0<\lambda_1<1$ and $X_0,M_1>0$, such that for any $y\in[0,\infty)$, if $0<|\lambda|<\lambda_1$ and $\alpha_0<-X_0$, then
	$$-2M_0<u_0(\lambda^2 y)+\lambda^2 u_1(\lambda^2 y)+\lambda^4 u_2(\lambda^2 y)+\dots+\lambda^{2n} u_n(\lambda^2 y)<-\frac{m_0}{2},$$
	and
	$$-M_1 \leq F_1(\lambda^2 y)+\beta_0-\frac{\lambda^{2n+2}}{2}\alpha_0^2-\big(u_0(0)+\lambda^2 u_1(0)+\dots+\lambda^{2n} u_n(0)\big)\alpha_0<0.$$
	
	Let
	$$\displaystyle{ w(y)=e^{-\int_0^y(u_0+\dots+\lambda^{2n} u_n)(\lambda^2 t)dt}g(y)}.$$
	Then $w(y)$ satisfies
	\begin{equation}\label{1.12}
		\left\{
		\begin{array}{l}
			\begin{aligned}
				&w^{\prime}(y)=\frac{\lambda^{2n+2}}{2}e^{\int_0^y(u_0+\dots+\lambda^{2n} u_n)(\lambda^2 t)dt}w^2(y)+\\
				&	e^{-\int_0^y(u_0+\dots+\lambda^{2n} u_n)(\lambda^2 t)dt}
				\big(F_1(\lambda^2 y)+\beta_0-\frac{\lambda^{2n+2}}{2}\alpha_0^2-\big(u_0(0)+\dots+\lambda^{2n} u_n(0)\big)\alpha_0\big),\\
				&	w(0)=g(0)=\alpha_0.
			\end{aligned}	
		\end{array}
		\right.
	\end{equation}
	Then the existence interval of the solution to \eqref{1.11} is same as that of  \eqref{1.12}, of which can be estimated by upper and lower solutions method as follows.
	
	Consider
	\begin{equation}\label{1.13}
		\left\{
		\begin{array}{ll}
			\overline{w}^{\prime}(y)=\overline{w}^2(y),\\
			\overline{w}(0)=\alpha_0.
		\end{array}
		\right.
	\end{equation}
	Solve \eqref{1.13} to get
	$$\displaystyle{\overline{w}(y)=\frac{\alpha_0}{1-\alpha_0y}}$$ for $ y\in[0,\infty).$
	
	Consider
	\begin{equation}\label{1.14}
		\left\{
		\begin{array}{ll}
			\underline{w}^{\prime}(y)=-M_{1}e^{2M_0y},\\
			\underline{w}(0)=\alpha_0.
		\end{array}
		\right.
	\end{equation}
	Solve \eqref{1.14} to obtain
	$$\displaystyle{\underline{w}(y)=\alpha_0+\frac{M_1}{2M_0}(1-e^{2M_0y})}$$ for $ y\in[0,\infty).$
	
	When $y \in[0,+\infty)$, we have
	$$-M_{1}e^{2M_0y} \leq w^{\prime}(y) \leq w^2(y).$$
	Thus it follows that $\underline{w}(y)\leq w(y)\leq \overline{w}(y)$ for any $y\in[0,+\infty)$, which implies that $w(y)\in C^1([0,+\infty))$ and $g(y)\in C^1([0,+\infty))$.
	
	Moreover, for any $0<|\lambda|<\lambda_1$ and $y\in[0,+\infty)$, we claim that
	\begin{equation}\label{07091}	
		-\frac{2M_1}{m_0}\leq g(y)<0.
	\end{equation}
	In fact, for the upper bound, we have
	\begin{equation*}
		\begin{aligned}
			&g(y)=e^{\int_0^y(u_0+\dots+\lambda^{2n} u_n)(\lambda^2 t)dt}w(y)\leq
			e^{\int_0^y(u_0+\dots+\lambda^{2n} u_n)(\lambda^2 t)dt}\overline{w}(y)\\
			&=e^{\int_0^y(u_0+\dots+\lambda^{2n} u_n)(\lambda^2 t)dt}\frac{\alpha_0}{1-\alpha_0y}<0.
		\end{aligned}
	\end{equation*}
	For the lower bound, it follows from \eqref{1.11} that
	\begin{equation*}
		\begin{aligned}
			&g^{\prime}(y)=\frac{\lambda^{2n+2}}{2} g^2(y)+\big(u_0+\dots+\lambda^{2n} u_n\big)(\lambda^2 y)g(y)\\
			&+F_1(\lambda^2 y)+\beta_0-\frac{\lambda^{2n+2}}{2}\alpha_0^2-\big(u_0(0)+\lambda^2 u_1(0)+\dots+\lambda^{2n} u_n(0)\big)\alpha_0>-\frac{m_0}{2}g(y)-M_{1}.
		\end{aligned}
	\end{equation*}
	Then there must be $g^{\prime}>0$, whenever $g<\displaystyle{\frac{-2M_1}{m_0}}$. The claim \eqref{07091} is thus proved and the proof of Lemma \ref{La6.2}  is complete.$\hfill\Box$
	
	Now we are ready to prove Theorem \ref{2}.
	
	{\bf Proof of Theorem \ref{2}.}
	Denote the solution of \eqref{1.11} by $g(y,\beta_0)$, which is presented in Lemma \ref{La6.2}.
	Setting $y=\frac{2\pi}{\lambda^2}$ in \eqref{1.11}. Due to the facts that $u_i (i=1,2,\cdots,n)$ are periodic function, it follows that
	$$	g(\frac{2\pi}{\lambda^2},\beta_0)=g(0,\beta_0)$$
	is equivalent to
	\begin{equation}\label{6.6}		-g^{\prime}(\frac{2\pi}{\lambda^2},\beta_0)+\frac{\lambda^{2n+2}}{2}g^2(\frac{2\pi}{\lambda^2},\beta_0)+\beta_0-\frac{\lambda^{2n+2}}{2}\alpha_0^2=0.
	\end{equation}
	
	Let $$\Phi(\lambda,\beta_0)=-g^{\prime}(\frac{2\pi}{\lambda^{2}},\beta_0)+\frac{\lambda^{2n+2}}{2}g^2(\frac{2\pi}{\lambda^2},\beta_0)+\beta_0-\frac{\lambda^{2n+2}}{2}\alpha_0^2=-(u_0+\dots+\lambda^{2n}u_n)(0)(g(\frac{2\pi}{\lambda^2},\beta_0)-\alpha_0),$$
	where $(\lambda,\beta_0)\in A=\{(\lambda,\beta_0)\big|0< |\lambda|  <\lambda_1, |\beta_0|<1\}$ and $g$ is presented in Lemma \ref{La6.2}.
	
	In the following, we will apply for the implicit theorem to prove 	$\Phi(\lambda,\beta_0)=0$ which is exactly \eqref{6.6}. To this end, the value of $\Phi(0,\beta_0)$  should be fixed. We consider the following initial-value problem:
	\begin{equation}\label{1.16}
		\left\{
		\begin{array}{ll}
			-\tilde{g}^{\prime}(y,\beta_0)+u_0(0)\tilde{g}(y,\beta_0)+\beta_0-u_0(0)\alpha_0=0,\\
			\tilde{g}(0,\beta_0)=\alpha_0.
		\end{array}
		\right.		
	\end{equation}
	Solve \eqref{1.16} to get
	$$\displaystyle{\tilde{g}(y,\beta_0)=\alpha_0-\frac{\beta_0}{u_0(0)}(1-e^{u_0(0)y})}.$$
	Let $r(y,\beta_0)=g(y,\beta_0)-\tilde{g}(y,\beta_0)$. Then it deduces that
	\begin{equation}\label{1.17}
		\left\{
		\begin{array}{ll}
			-r^{\prime}(y,\beta_0)+u_0(0)r(y,\beta_0)+\frac{\lambda^{2n+2}}{2} g^2(y,\beta_0)+\big(u_0(\lambda^2 y)+\dots+\lambda^{2n} u_n(\lambda^2 y)-u_0(0)\big)g(y,\beta_0)\\
			+F_1(\lambda^2 y)-\frac{\lambda^{2n+2}}{2}\alpha_0^2-\big(\lambda^2 u_1(0)+\lambda^4 u_2(0)+\dots+\lambda^{2n} u_n(0)\big)\alpha_0=0,\\
			r(0,\beta_0)=0.
		\end{array}
		\right.		
	\end{equation}
	Solving \eqref{1.17} yields
	\begin{equation*}
		\begin{aligned}
			r(y,\beta_0)=e^{u_0(0)y}\Big(\int_{0}^{y}\big(\frac{\lambda^{2n+2}}{2} g^2(t,\beta_0)+\big(u_0(\lambda^2 t)+\dots+\lambda^{2n} u_n(\lambda^2 t)-u_0(0)\big)g(t,\beta_0)\\
			+F_1(\lambda^2 t)-\frac{\lambda^{2n+2}}{2}\alpha_0^2-\big(\lambda^2 u_1(0)+\lambda^4 u_2(0)+\dots+\lambda^{2n} u_n(0)\big)\alpha_0\big)e^{-u_0(0)t}dt\Big).
		\end{aligned}
	\end{equation*}
	In particular, we have
	\begin{equation}\label{07092}
		\begin{aligned}
			r(\frac{2\pi}{\lambda^2},\beta_0)&=e^{\frac{2\pi u_0(0)} {\lambda^2}}\Big(\int_{0}^{\frac{2\pi}{\lambda^2}}\big(\frac{\lambda^{2n+2}}{2} g^2(t,\beta_0)+\big(u_0(\lambda^2 t)+\dots+\lambda^{2n} u_n(\lambda^2 t)-u_0(0)\big)g(t,\beta_0)\\
			&+F_1(\lambda^2 t)-\frac{\lambda^{2n+2}}{2}\alpha_0^2-\big(\lambda^2 u_1(0)+\lambda^4 u_2(0)+\dots+\lambda^{2n} u_n(0)\big)\alpha_0\big)e^{-u_0(0)t}dt\Big)\\
			&=e^{\frac{2\pi u_0(0)}{\lambda^2}}\Big(\int_{0}^{2\pi}\big(\frac{\lambda^{2n}}{2} g^2(\frac{t}{\lambda^2},\beta_0)+(\frac{u_0(t)-u_0(0)}{\lambda^2}+u_1(t)+\cdots+\lambda^{2n-2} u_n(t))g(\frac{t}{\lambda^2},\beta_0)\\
			&+\frac{F_1(t)}{\lambda^2}-\frac{\lambda^{2n}}{2}\alpha_0^2-\big(u_1(0)+\lambda^2 u_2(0)+\dots+\lambda^{2n-2} u_n(0)\big)\alpha_0\big)e^{\frac{-u_0(0)t}{\lambda^2}}dt\Big).
		\end{aligned}
	\end{equation}
	Since $g(y)$ is uniformly bounded with respect to $\lambda$ and $F_1(t)$ is a smooth function with the periodic  $2\pi$, we obtain
	\begin{equation}\label{07093}
		\begin{aligned}	
			\lim\limits_{\lambda \rightarrow 0}e^{\frac{2\pi u_0(0)}{\lambda^2}}\int_{0}^{2\pi} Ce^{\frac{u_0(0)t}{\lambda^2}} dt =\lim\limits_{\lambda \rightarrow 0}\frac{-C\lambda^2}{u_0(0)}(1-e^{\frac{2\pi u_0(0)}{\lambda^2}})=0,
		\end{aligned}
	\end{equation}				
	and
	\begin{equation}\label{07094}
		\begin{aligned}	
			&\lim\limits_{\lambda \rightarrow 0}e^{\frac{2\pi u_0(0)}{\lambda^2}}\int_{0}^{2\pi} \frac{F_1(t)}{\lambda^2}e^{\frac{-u_0(0)t}{\lambda^2}} dt =\lim\limits_{\lambda \rightarrow 0}-\frac{e^{\frac{2\pi u_0(0)}{\lambda^2}}}{u_0(0)}\int_{0}^{2\pi}F_1(t) d(e^{\frac{-u_0(0)t}{\lambda^2}})\\
			&=\lim\limits_{\lambda \rightarrow 0}\frac{e^{\frac{2\pi u_0(0)}{\lambda^2}}}{u_0(0)}\int_{0}^{2\pi}e^{\frac{-u_0(0)t}{\lambda^2}}F(t) dt=0,\\
		\end{aligned}
	\end{equation}				
	Moreover, we have
	\begin{equation*}
		\begin{aligned}
			\left\vert e^{\frac{2\pi u_0(0)}{\lambda^2}}\int_{0}^{2\pi} \frac{u_0(t)-u_0(0)}{\lambda^2}g(\frac{t}{\lambda^2},\beta_0)e^{\frac{-u_0(0)t}{\lambda^2}} dt \right\vert
			\leq
			e^{\frac{2\pi u_0(0)}{\lambda^2}}\int_{0}^{2\pi} \frac{|u_0(t)-u_0(0)|}{\lambda^2}Ce^{\frac{-u_0(0)t}{\lambda^2}} dt.
		\end{aligned}
	\end{equation*}
	Since $u_0(t)-u_0(0)$ is smooth,  we have that $|u_0(t)-u_0(0)|$ is Lipschitz continuous and thus weakly differentiable. It follows that $$\frac{d}{dt}|u_0(t)-u_0(0)|=\operatorname{sgn}(u_0(t)-u_0(0))u_0^{\prime}(t)$$
	holds in weak sense.
	
	From the definition of weak derivatives, it follows that
	\begin{equation*}
		\begin{aligned}
			e^{\frac{2\pi u_0(0)}{\lambda^2}}\int_{0}^{2\pi} \frac{|u_0(t)-u_0(0)|}{\lambda^2}Ce^{\frac{-u_0(0)t}{\lambda^2}} dt
			&=-\frac{Ce^{\frac{2\pi u_0(0)}{\lambda^2}}}{u_0(0)}\int_{0}^{2\pi} {|u_0(t)-u_0(0)|}d(e^{\frac{-u_0(0)t}{\lambda^2}}) \\
			&=
			\frac{Ce^{\frac{2\pi u_0(0)}{\lambda^2}}}{u_0(0)}\int_{0}^{2\pi}e^{\frac{-u_0(0)t}{\lambda^2}}\cdot \operatorname{sgn}(u_0(t)-u_0(0)) \cdot u^{\prime}_0(t) dt\\
			&\leq
			\frac{Ce^{\frac{2\pi u_0(0)}{\lambda^2}}}{u_0(0)}\int_{0}^{2\pi}e^{\frac{-u_0(0)t}{\lambda^2}} dt.
		\end{aligned}
	\end{equation*}
	Hence
	\begin{equation}\label{07095}
		\begin{aligned}
			\lim\limits_{\l \rightarrow 0}e^{\frac{2\pi u_0(0)}{\lambda^2}}\int_{0}^{2\pi} \frac{u_0(t)-u_0(0)}{\lambda^2}g(\frac{t}{\lambda^2},\beta_0)e^{\frac{-u_0(0)t}{\lambda^2}} dt=0.
		\end{aligned}
	\end{equation}
	It follows from \eqref{07092}-\eqref{07095} that
	$$\lim\limits_{\lambda \rightarrow 0} r(\frac{2\pi}{\lambda^2},\beta_0)=0,$$ which is uniformly with respect to $\beta_0 \in (-1,1)$.
	
	Thus
	$$\lim\limits_{\lambda \rightarrow 0}g(\frac{2\pi}{\lambda^2},\beta_0)=\lim\limits_{\lambda \rightarrow 0}\tilde{g}(\frac{2\pi}{\lambda^2},\beta_0)=\alpha_0-\frac{\beta_0}{u_0(0)}.$$
	So we can define
	
	\begin{equation*}
		\begin{aligned}
			\Phi(0,\beta_0)&=\lim\limits_{\lambda\rightarrow 0}\Phi(\lambda,\beta_0)\\
			&=\lim\limits_{\lambda\rightarrow0}[(u_{0}(0)+\lambda^2u_1(0)+\cdots+\lambda^{2n}u_n(0))(\alpha_0-g(\frac{2\pi}{\lambda^2},\beta_0))]\\
			&=\beta_0,
		\end{aligned}
	\end{equation*}
	for any $\beta_0\in (-1,1).$
	Then $\Phi(\lambda,\beta_0)$ can be defined on  $U=\{(\lambda,\beta_0)\big| |\lambda|  <\lambda_{1}, |\beta_0|<1\}$. Furthermore, we can verify that $\Phi(\lambda,\beta_0)$  is continuous on $U.$ In particular, at the point $(0,0),$
	since $\Phi(\lambda,\beta_0)$ converges to $\beta_0$ as $\lambda \rightarrow 0$ uniformly with respect to $\beta_0$, we have
	\begin{equation}\label{07101}	
		\lim\limits_{\substack{\beta_0\rightarrow 0 \\ \lambda \rightarrow 0}}\Phi(\lambda,\beta_0)=\lim\limits_{\beta_0\rightarrow 0}\lim\limits_{\lambda \rightarrow 0}\Phi(\lambda,\beta_0)=0=\Phi(0,0).
	\end{equation}
	
	It holds that  $\Phi{_{\beta_0}}=\frac{\partial \Phi}{\partial \beta_0}$ which is the derivative with respect to the parameter $\beta_0$  is continuous on $A=\{(\lambda,\beta_0)\big|0< |\lambda|  <\lambda_1, |\beta_0|<1\}$. Actually, we can prove that $\Phi{_{\beta_0}}$  is well-defined on $U$. To this end, we will first  fix the value of $\frac{\partial \Phi}{\partial \beta_0}$ at $\lambda =0$. Differentiate with respect to $\beta_0$ on both sides of equation \eqref{1.11} to obtain
	\begin{equation}\label{1.18}
		\left\{
		\begin{array}{ll}
			-\frac{d}{d y}\frac{\partial g(y,\beta_0)}{\partial \beta_0}+{\lambda^{2n}} g(y,\beta_0)\frac{\partial g(y,\beta_0)}{\partial \beta_0}+(u_0(\lambda^2 y)+\cdots+\lambda^{2n} u_n(\lambda^2 y))\frac{\partial g(y,\beta_0)}{\partial \beta_0}+1=0,\\
			\frac{\partial g}{\partial \beta_0}(0,\beta_0)=0.\\
		\end{array}
		\right.
	\end{equation}
	
	Note that
	$$-\frac{2M_1}{m_0} \leq g(y,\beta_0)<0, \qquad -2M_0< u_0(\lambda^2y)+\lambda^2u_1(\lambda^2y)+\cdots+\lambda^{2n}u_n(\lambda^2 y)<-\frac{m_0}{2}$$
	for $y \in [0, +\infty)$ and $\beta_0 \in (-1,1).$
	
	Consider the following initial-value problem:
	\begin{equation}\label{1.19}
		\left\{
		\begin{array}{ll}
			\frac{d}{d y}\frac{\partial \overline{g}(y,\beta_0)}{\partial \beta_0}=-\frac{m_0}{2}\frac{\partial \overline{g}(y,\beta_0)}{\partial \beta_0}+1,\\
			\frac{\partial \overline{g}}{\partial \beta_0}(0,\beta_0)=0.\\
		\end{array}
		\right.
	\end{equation}
	Solve \eqref{1.19} to get
	$$\frac{\partial \overline{g}}{\partial \beta_0}(y,\beta_0)=-\frac{2}{m_0}(e^{-\frac{m_0}{2}y}-1)$$
	for $\beta_0 \in (-1,1)$ and $y \in [0,\infty).$
	
	Consider the following initial-value problem:
	\begin{equation}\label{6.13}
		\left\{
		\begin{array}{ll}
			\frac{d}{d y}\frac{\partial \underline{g}(y,\beta_0)}{\partial \beta_0}=-(2M_0+\frac{2M_1}{m_0})\frac{\partial \underline{g}(y,\beta_0)}{\partial \beta_0}+1,\\
			\frac{\partial \underline{g}}{\partial \beta_0}(0,\beta_0)=0.
		\end{array}
		\right.
	\end{equation}
	Solve \eqref{6.13} to get
	$$\frac{\partial \underline{g}}{\partial \beta_0}(y,\beta_0)=\frac{-1}{2M_0+\frac{2M_1}{m_0}}(e^{-(2M_0+\frac{2M_1}{m_0})y}-1)$$
	for $\beta_0 \in (-1,1)$ and $y \in [0,\infty).$
	
	Applying the comparison theorem, we obtain
	$$ 0 \leq \frac{\partial \underline{g}}{\partial \beta_0}(y,\beta_0) \leq \frac{\partial g}{\partial \beta_0}(y,\beta_0) \leq \frac{\partial \overline{g}}{\partial \beta_0}(y,\beta_0)\leq \frac{2}{m_0},$$
	for all $y \in [0, +\infty)$ and $\beta_0 \in (-1,1)$,  which means that the solution $\frac{\partial g}{\partial \beta_0}(y,\beta_0)$ of equation \eqref{1.18} is uniformly bounded on $(y,\beta)\in A$.
	
	Since $g$ is continuously differentiable with respect to  $(\lambda, \beta_0)\in A$, we have
	\begin{align*}
		\frac{\partial r}{\partial  \beta_0}(\frac{2\pi}{\lambda^2},\beta_0)
		&=e^{\frac{2\pi u_0(0)}{\lambda^2}}\Big(\int_{0}^{2\pi}\big({\lambda^{2n}} g(\frac{t}{\lambda^2},\beta_0)\frac{\partial g}{\partial \beta_0}(\frac{t}{\lambda^2},\beta_0)\\
		&+(\frac{u_0(t)-u_0(0)}{\lambda^2}+u_1(t)+\cdots+\lambda^{2n-2}u_n(t))\frac{\partial g}{\partial \beta_0}(\frac{t}{\lambda^2},\beta_0)
		\big)e^{\frac{-u_0(0)t}{\lambda^2}}dt\Big).
	\end{align*}
	Due to the uniform boundedness of $\frac{\partial g}{\partial \beta_0}(\frac{2\pi}{\lambda^2},\beta_0)$, we apply a similar process as in \eqref{07092}-\eqref{07095} to obtain
	$$\lim\limits_{\lambda \rightarrow 0}\frac{\partial r}{\partial \beta_0}(\frac{2\pi} {\lambda^2},\beta_0)=0,$$
	which holds uniformly on $\beta_0 \in (-1,1).$
	
	Thus we can define
	\begin{align*}
		\frac{\partial \Phi}{\partial \beta_0}(0,\beta_0)=\lim\limits_{\lambda \rightarrow 0} \frac{\partial \Phi}{\partial \beta_0}(\lambda^2,\beta_0)
		&=\lim\limits_{\lambda \rightarrow 0}-(u_0+\cdots+\lambda^{2n}u_n)(0)\cdot \frac{\partial g}{\partial \beta_0}(\frac{2\pi}{\lambda^2},\beta_0)\\
		&=\lim\limits_{\lambda \rightarrow 0}-(u_0+\cdots+\lambda^{2n}u_n)(0)\cdot \frac{\partial \tilde{g}}{\partial \beta_0}(\frac{2\pi}{\lambda^2},\beta_0)\\
		&=\lim\limits_{\lambda \rightarrow 0}\frac{(u_0+\cdots+\lambda^{2n}u_n)(0)}{u_0(0)}\cdot(1-e^{\frac{2\pi u_0(0)}{\lambda^2}})\\
		&=1
	\end{align*}
	for any $\beta_0 \in (-1,1).$
	
	Furthermore, similar to \eqref{07101},we can prove that $\frac{\partial \Phi(\lambda,\beta_0)}{\partial \beta_0}$ is continuous on $U$. Since $$\Phi(0,0)=0,  \frac{\Phi(0,0)}{\partial \beta_0}=1 \neq 0,$$
	by the implicit function theorem, there exists $0<\lambda_0<\lambda_1$ such that for every $0<|\lambda|<\lambda_0$, there exists a unique $\beta_0$ satisfying $\Phi(\lambda,\beta_0(\lambda))=0$, that is (\ref{6.6}) holds.
	
	The proof of Theorem  \ref{2} is complete.$\hfill\Box$

	{\bf Acknowledgements.}
	This research was  partially supported by the National Natural Science Foundation of China (NNSFC) (No. 12671278) and the Interdisciplinary Project of Capital Normal University(No. 2026JCYY04).

\end{document}